\documentclass[pdflatex,sn-apa,11pt]{sn-jnl}
\linespread{1.25} 

\usepackage[shortlabels]{enumitem}
\usepackage{graphicx}%
\usepackage{bm} 
\usepackage{multirow}%
\usepackage{amsmath,amssymb,amsfonts}%
\usepackage{amsthm}%
\usepackage{mathrsfs}%
\usepackage[title]{appendix}%
\usepackage{xcolor}%
\usepackage{textcomp}%
\usepackage{manyfoot}%
\usepackage{booktabs}%
\usepackage{algorithm}%
\usepackage{algorithmicx}%
\usepackage{algpseudocode}%
\usepackage{listings}%

\newcommand{\possessivecite}[1]{\citeauthor{#1}'s (\citeyear{#1})}


\theoremstyle{thmstyleone}%
\newtheorem{theorem}{Theorem}

\newtheorem{proposition}{Proposition}

\theoremstyle{thmstyletwo}%
\newtheorem{example}{Example}%
\newtheorem{remark}{Remark}%
\newtheorem{lemma}{Lemma}
\newtheorem{corollary}{Corollary}

\theoremstyle{thmstylethree}%

\raggedbottom

\begin{document}

\title[Multiple objective linear programming over the probability simplex]{Multiple objective linear programming over the probability simplex}






\author[]{\fnm{Anas} \sur{Mifrani}\footnote{Toulouse Mathematics Institute, University of Toulouse, F-31062 Toulouse Cedex 9, France. Email address: anas.mifrani@math.univ-toulouse.fr.\\ ORCID: 0009-0005-1373-9028.}}


\abstract{
This paper considers the problem of maximizing multiple linear functions over the probability simplex. A classification of feasible points is indicated. A necessary and sufficient condition for a member of each class to be an efficient solution is stated. This characterization yields a computational procedure for ascertaining whether a feasible point is efficient. The procedure does not require that candidates for efficiency be extreme points. An illustration of the procedure is offered.
}

\keywords{Multiple objective linear program, Efficient solution, Multi-objective optimization, Vector maximization, Probability simplex.}



\maketitle

\section{Introduction}\label{sec1}

Consider $k \geq 2$ vectors $c_1, ..., c_k$ in $\mathbb{R}^{n}$, where $n \geq 2$. Let $X$ denote the set of all points $x \in \mathbb{R}^{n}$ that satisfy $\sum_{i = 1}^{n}x_j = 1$ and $x_j \geq 0$ for all $j = 1, ..., n$. We shall concern ourselves with the problem of simultaneously maximizing the functions $c_i^{T}x$, $i = 1, ..., k$, over $X$. This \textit{multiple objective programming problem} may be expressed as
\begin{equation}
\tag{P}
\label{P}
\begin{aligned}
\operatorname{VMAX:} \quad & Cx = (c_1^{T}x, ..., c_k^{T}x)^T, \textrm{ subject to } x \in X,\\
\end{aligned}
\end{equation}
where $C$ denotes the $k \times n$ matrix whose $i$th row, $i = 1, ..., k$, contains the vector $c_i$.

Each point in $X$ may be interpreted as a probability distribution. If we let $c_{i, j}$ denote the $j$th component of $c_i$, for each $i = 1, ..., k$, then $c_i^{T}x$ represents the expectation of a random variable that takes values in $\{c_{i, 1}, ..., c_{i, n}\}$ assuming each $c_{i, j}$ occurs with probability $x_j$. Problem (\ref{P}) can therefore be viewed as the problem of finding the probability distributions that ensure the largest possible expectation for a multivariate random variable with realizations in $\{c_1, ..., c_k\}$. 

To avoid trivialities, we assume that no single point exists at which all the functions of this problem achieve their maximum values\footnote{This assumption merely serves as motivation for the paper; it has no implications for the validity of the results developed in it. If there exists a point at which all the functions are maximized, then Problem (\ref{P}) reduces, in effect, to an ordinary mathematical programming problem involving any one of the criteria. At that point, our findings, whilst still applicable, may lose some of their originality as they may coincide with familiar facts of standard mathematical programming.}. The concept of efficiency has figured prominently in the formal analysis and solution of such problems. 
We call a point $x^{\circ} \in X$ an \textit{efficient solution} of Problem (\ref{P}) when there is no $x \in X$ such that $c_i^{T}x \geq c_i^{T}x^{\circ}$ for all $i = 1, ..., k$ with at least one strict inequality. Points which are not efficient solutions are said to be \textit{dominated}. Each of the functions $c_i^{T}x$ is called a \textit{criterion} of Problem (\ref{P}). Let $X_E$ denote the set of all efficient solutions of Problem (\ref{P}). 


Since its criteria are linear and $X$ is a convex polyhedral set, Problem (\ref{P}) belongs to the class of multiple objective linear programs. These are some of the simplest and most extensively investigated of multiple objective programs. Already 26 years ago, \cite{benson1998outer} reported that they had been ``studied in literally hundreds of articles, chapters in books, and books". Some recurring topics of research have been the existence of efficient solutions \citep{benson1978existence, ecker1975finding}, properties of the efficient solution set \citep{steuer1986multiple, yu2013multiple, benson1995geometrical}, the relation to ordinary linear programming \citep{GEOFFRION1968618, evans1973revised}, and methods for generating the efficient solution set \citep{isermann1977enumeration, armand1991determination, sayin1996algorithm} or merely that portion of it which lies in the set of extreme points of the constraint polyhedron \citep{evans1973revised, ecker1978finding}. 

Researchers have often found it fruitful when working with special instances of mathematical programming problems (such as Problem (\ref{P})) to try to analyze them as thoroughly as possible before considering applying the general theory. The hope is that, as a consequence of such an analysis, properties will emerge which suggest specialized solution procedures that are better suited for the problem at hand than the general procedures, either because unnecessary computations built into the latter are avoided, or simply because of superior ease of implementation. As an illustration of this approach, take \possessivecite{benson1979vector} treatment of a multiple objective program involving two concave functions defined on a convex set. With the help of an alternate definition of efficiency, he shows that the efficient solutions solve a set of ordinary programming problems in which one criterion is to be maximized while the other is bounded below. 
This characterization leads to a parametric procedure for finding all efficient solutions. Contrary to various procedures devised for more general problems, Benson's ``does not require continual suboptimizations or examinations of the feasibility of systems of equations" \citep[p. 4]{benson1979vector}.

Our aim in this paper is to pursue a similar approach in relation to Problem (\ref{P}), and to consider the results. For reasons that will be clear from the development, we will classify the points in $X$ into three groups, each of which having a certain geometrical interpretation. 
For each group, we shall state a necessary and sufficient condition for a member to be an efficient solution of Problem (\ref{P}). For one group in particular, we shall demonstrate that the existence of an efficient solution among its members implies that every point in $X$ is an efficient solution. A computational procedure will follow that indicates whether or not a given $x^{\circ} \in X$ is an efficient solution. If $x^{\circ}$ is efficient, the procedure will, in certain cases, reveal an entire region of $X$ consisting only of efficient solutions. Unlike the standard efficiency tests for multiple objective linear programming, the tests employed by this procedure do not require that the points under consideration represent vertices of $X$. These results are developed in Section \ref{sec2}, then illustrated in Section \ref{sec3}. Section \ref{sec4} summarizes the paper.   



\section{Characterization of efficient solutions}\label{sec2}

We preface the development with some additional notation. Let $J = \{1, ..., n\}$ and $I = \{1, ..., k\}$. For each $j \in J$, let $\bm{e_j}$ signify the $n$-dimensional vector with a $1$ in component $j$ and zeros elsewhere. For any $d \in \mathbb{R}^{n}$, let $d_{\max}$ denote the value of the largest component in $d$, and $J^{*}(d)$ denote the set $\{j \in J: d_j = d^{*}\}$. Finally, write $\Lambda^{>} = \{\lambda \in \mathbb{R}^{k}: \lambda_{i} > 0,\ \forall i \in I\}$.

A basic question in multiple objective programming is whether and when efficient solutions exist. Even linear programs need not have efficient solutions; see, for example, Theorem 3.2(iii) of \cite{evans1973revised}. Fortunately, the issue does not arise for Problem (\ref{P}), as is revealed by the following proposition. 



\begin{proposition}
\label{prop0}
The efficient solution set $X_E$ is nonempty.
\end{proposition}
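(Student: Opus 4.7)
The plan is to exhibit an efficient solution by solving a scalarized linear program with strictly positive weights.

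First I would pick any $\lambda \in \Lambda^{>}$ (for instance, $\lambda_i = 1$ for every $i \in I$), and form the scalar linear program
\begin{equation*}
\max_{x \in X}\ \lambda^{T} C x \;=\; \sum_{i \in I} \lambda_i\, c_i^{T} x.
\end{equation*}
Next I would observe that $X$ is nonempty (it contains, e.g., $\bm{e_1}$), and is closed and bounded in $\mathbb{R}^n$, hence compact. Since the scalar objective is linear and therefore continuous, Weierstrass' theorem guarantees that the maximum is attained at some point $x^{\circ} \in X$.

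Finally I would show that this $x^{\circ}$ belongs to $X_E$ by contradiction. If $x^{\circ}$ were dominated, there would exist $x \in X$ with $c_i^{T} x \geq c_i^{T} x^{\circ}$ for all $i \in I$ and strict inequality for at least one index. Multiplying by the strictly positive weights $\lambda_i$ and summing over $i$ would give $\lambda^{T} C x > \lambda^{T} C x^{\circ}$, contradicting the optimality of $x^{\circ}$ in the scalarized problem. Hence $x^{\circ} \in X_E$, so $X_E \neq \emptyset$.

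I do not anticipate any real obstacle: the argument is a direct application of the classical weighted-sum scalarization together with compactness of the probability simplex. The only point worth stating carefully is the need for \emph{strictly} positive weights, which is what rules out the ``weak'' efficiency that would otherwise be produced by mere nonnegativity.
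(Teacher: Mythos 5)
Your proof is correct, but it takes a different route from the paper. The paper disposes of the proposition in one line by invoking an external existence theorem (Corollary 4.6 of Yu, as reported in Benson, 1978), which guarantees $X_E \neq \emptyset$ whenever the image $\{Cx : x \in X\}$ is closed and bounded; the only thing verified is compactness of that image. You instead give a direct, self-contained construction: maximize an equal-weights scalarization over the compact set $X$ and check that any maximizer is efficient because a dominating point would strictly increase the weighted sum. Your argument is essentially the ``easy'' (sufficiency) half of the Evans--Steuer characterization that the paper only introduces later as Lemma \ref{lem2}, so you are not circular, and you correctly isolate the one delicate point, namely that the weights must be strictly positive (nonnegative weights would only yield weak efficiency). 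What your approach buys is elementarity and an explicit efficient point in hand; what the paper's approach buys is brevity and a uniform appeal to a standard existence result. Both ultimately rest on the same compactness of the simplex, so either is acceptable here.
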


\begin{proof}
According to Corollary 4.6 of \cite{yu1974cone} as it was reported in \cite{benson1978existence}, a sufficient condition for $X_E$ to be nonempty is that the set $\{Cx: x \in X\}$ be closed and bounded. That this is indeed the situation (in, say, sup-norm topology) is readily verifiable.
\end{proof}

For reasons that will soon become apparent, we will have some interest in ordinary linear programs over $X$. Lemma \ref{lem1} gives a complete specification of the optimal solution set for such a program. For maximum generality, we state the lemma with an $n \geq 3$ in mind, then present the corresponding result for $n = 2$ in Remark \ref{rem0}.    

\begin{lemma}
\label{lem1}
Assume that $d \in \mathbb{R}^{n}$. Let $S$ denote the set of all optimal solutions to the ordinary linear program given by
\begin{equation}
\tag{$P_{d}$}
\label{$P_{d}$}
\begin{aligned}
\operatorname{\max} \quad & d^{T}x, \textrm{ subject to } x \in X.\\
\end{aligned}
\end{equation}
Then one and only one of three situations obtains:
\begin{enumerate}
\item[{\rm (i)}] $S = X$;
\item[{\rm (ii)}] there exists a unique $j \in J$ such that $S = \{\bm{e_j}\}$;
\item[{\rm (iii)}] or there exist $p = 2, ..., n-1$ distinct indices $j_1, ..., j_p \in J$ such that $S = \{x_{j_1}\bm{e_{j_1}} + ... + x_{j_p}\bm{e_{j_p}}: \sum_{i = 1}^{p}x_{j_i} = 1,\ x_{j_i} > 0\}$.
\end{enumerate}
Situation (i) arises if and only if $J^{*}(d) = J$; situation (ii) arises if only if $J^{*}(d) = \{j\}$; and situation (iii) arises if and only if $J^{*}(d) = \{j_1, ..., j_p\}$.
\end{lemma}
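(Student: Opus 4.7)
The plan is to reduce the lemma to a standard observation about linear functions on the simplex, then case-split on the cardinality of $J^{*}(d)$.

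First, for any $x \in X$, the objective value can be written as a convex combination of the components of $d$:
\begin{equation*}
d^{T}x \;=\; \sum_{j \in J} d_j x_j \;\leq\; d_{\max}\sum_{j \in J} x_j \;=\; d_{\max},
\end{equation*}
with equality if and only if $x_j = 0$ for every $j$ with $d_j < d_{\max}$, that is, if and only if the support of $x$ is contained in $J^{*}(d)$. This identifies $S$ with the set of probability distributions supported on $J^{*}(d)$, which is precisely the convex hull $\operatorname{conv}\{\bm{e_j} : j \in J^{*}(d)\}$ — equivalently, the face of $X$ spanned by those vertices.

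From here, the three situations follow by splitting on $p := |J^{*}(d)|$. If $p = n$, then $J^{*}(d) = J$, so every component of $d$ equals $d_{\max}$, the inequality above is an equality for every $x \in X$, and $S = X$, giving (i). If $p = 1$ with $J^{*}(d) = \{j\}$, then the only $x \in X$ supported on $\{j\}$ is $\bm{e_j}$, yielding (ii). In the remaining range $2 \leq p \leq n-1$, writing $J^{*}(d) = \{j_1, \dots, j_p\}$, the set $S$ is parametrized by the nonnegative coefficients of a convex combination of $\bm{e_{j_1}}, \dots, \bm{e_{j_p}}$, which is exactly the description in (iii). Since the three cases partition the possible values of $p$, the situations are mutually exclusive and exhaustive, and each of the stated biconditionals is immediate.

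There is no serious obstacle here: the argument is essentially the elementary maximum principle for a linear functional on a simplex, together with a cardinality case-split. The only mildly delicate point is verifying the equality case in the bound $d^{T}x \leq d_{\max}$, which is routine. I would also briefly reconcile the strict inequality $x_{j_i} > 0$ appearing in the author's description of (iii) with the convex-hull viewpoint, noting that the vertices $\bm{e_{j_i}}$ with $p = 1$ (which would violate the strict positivity) are already accounted for by case (ii), so the partition is consistent.
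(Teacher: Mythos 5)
Your central computation is correct and is essentially the same equality-case argument the paper itself uses: $d^{T}x \leq d_{\max}$ for all $x \in X$, with equality precisely when the support of $x$ lies inside $J^{*}(d)$, so that $S$ is the set of probability vectors supported on $J^{*}(d)$. The problem is what you then do with it. That set is the \emph{closed} face $\{\sum_{i=1}^{p} x_{j_i}\bm{e_{j_i}} : \sum_{i=1}^{p} x_{j_i} = 1,\ x_{j_i} \geq 0\}$, whereas case (iii) of the lemma asserts that $S$ equals the relatively open face with all $x_{j_i} > 0$. These are different sets, and your attempted reconciliation does not work: the three situations are mutually exclusive alternatives indexed by $d$ (equivalently, by $J^{*}(d)$), not by the candidate point. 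When $|J^{*}(d)| = p$ with $2 \leq p \leq n-1$, situation (ii) does not obtain at all, so the vertices $\bm{e_{j_1}}, \dots, \bm{e_{j_p}}$ --- which are genuinely optimal, since $d^{T}\bm{e_{j_i}} = d_{\max}$ --- are not ``accounted for by case (ii)''; they are simply absent from the set the lemma writes down. Concretely, for $n = 3$ and $d = (1,1,0)$ the optimal set is $\{(x_1, x_2, 0): x_1 + x_2 = 1,\ x_1, x_2 \geq 0\}$, which contains $\bm{e_1}$ and $\bm{e_2}$, while the set in (iii) excludes them. So your argument, carried through honestly, proves a statement that contradicts case (iii) as written rather than establishing it.

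For what it is worth, the paper's own proof of case (iii) contains the matching defect: it asserts that any $x \in X \setminus Y$ has a positive component $x_{j'}$ with $j' \notin \{j_1, \dots, j_p\}$, which is false for $x = \bm{e_{j_1}}$. The correct conclusion is the one your bound delivers, namely that $S = \operatorname{conv}\{\bm{e_j}: j \in J^{*}(d)\}$ in every case, with (i) and (ii) as the extreme instances $p = n$ and $p = 1$. If you want your write-up to stand, state and prove that version explicitly; do not smooth over the $x_{j_i} > 0$ discrepancy, because it is real and it propagates into the later results that rely on the description of $Y(j_1, \dots, j_p)$ as the full optimal set.
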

\begin{proof}
Since $d^{T}x$ is a continuous function on $X$, a compact set, the maximum in Problem (\ref{$P_{d}$}) exists, meaning that $S \neq \emptyset$. Let $v$ denote this maximum. The three situations are obviously mutually exclusive. To show that they can actually occur, it suffices to substantiate the second part of the theorem. As a preliminary, notice that $d^{T}x \leq d^{*}$ for every $x \in X$. Consequently, $v \leq d^{*}$.

\textit{Situation (i) arises if $J^{*}(d) = J$.}
Suppose that $J^{*}(d) = J$. Then $d_{j} = d^{*}$ for each $j \in J$, so that for every $x \in X$, $d^{T}x = d^{*}\sum_{j = 1}^{n}x_j = d^{*}$. Thus, from the previous observation, $d^{T}x \geq v$ and therefore $d^{T}x = v$ for every $x \in X$. This implies that every $x \in X$ is an optimal solution to Problem (\ref{$P_{d}$}).

\textit{Situation (ii) arises if $J^{*}(d) = \{j\}$.}
Next, suppose that $J^{*}(d) = \{j\}$ for some $j \in J$. Then $d^{T}\bm{e_j} = d_{j} = d^{*}$, so that $\bm{e_j} \in S$. Let $x \in X$ such that $x \neq \bm{e_j}$. Then there exists $j' \neq j$ such that $x_{j'} > 0$. This, combined with the fact that $d_{j'} < d_{j}$ for each $j' \neq j$, yields 
\begin{equation*}d^{T}x = \sum_{\substack{
    j' = 1 \\
    j' \neq j}}^{n}d_{j'}x_{j'} + d_{j}x_{j} < \sum_{\substack{
    j' = 1 \\
    j' \neq j}}^{n}d_{j}x_{j'} + d_{j}x_{j} = d_{j} = d^{*}.
\end{equation*} 
Thus, $x \notin S$. It follows that $S = \{\bm{e_j}\}$.

To simplify notational problems in relation to situation (iii), let $Y$ denote the set $\{x_{j_1}\bm{e_{j_1}} + ... + x_{j_p}\bm{e_{j_p}}: \sum_{i = 1}^{p}x_{j_i} = 1,\ x_{j_i} > 0\}$, and $J'$ denote the set $\{j_1, ..., j_p\}$. Notice that every $y \in Y$ is feasible for Problem (\ref{$P_{d}$}).

\textit{Situation (iii) arises if $J^{*}(d) = \{j_1, ..., j_p\}$.}
Suppose that $J^{*}(d) = \{j_1, ..., j_p\}$ for some $p = 2, ..., n-1$ indices $j_1, ..., j_p \in J$. Then $d_{j_1} = ... = d_{j_p} = d^{*}$. Moreover, for any $y \in Y$, \begin{equation*}d^{T}y = \sum_{i = 1}^{p}d_{j_i}x_{j_i} = d^{*},\end{equation*}
so that $Y \subseteq S$. Consider now any $x \in X \setminus Y$. As with the previous case, there exists $j' \notin J'$ such that $x_{j'} > 0$. Because $d_{j'} < d^{*}$ for each $j' \notin J'$, \begin{equation*}d^{T}x = \sum_{\substack{
    j' = 1 \\
    j' \notin J'}}^{n}d_{j'}x_{j'} + \sum_{i = 1}^{p}d^{*}x_{j_i} < \sum_{\substack{
    j' = 1 \\
    j' \notin J'}}^{n}d^{*}x_{j'} + \sum_{i = 1}^{p}d^{*}x_{j_i} = d^{*}.\end{equation*}
Therefore, $x \notin S$, whence $S = Y$.

To recapitulate, we have shown that the sufficiency portion of the theorem holds. The necessity portion results from this very fact in conjunction with the observation that the three situations, as well as the three values considered for $J^{*}(d)$, are mutually exclusive.
\end{proof}

\begin{remark}
\label{rem0}
Lemma \ref{lem1} holds for $n = 2$ when situation (iii) is eliminated. 
\end{remark}

\textit{The bulk of the succeeding development presupposes that $n \geq 3$}, but the main results are also applicable when $n = 2$. We return to this matter in Remark \ref{rem3}.

In the introduction to this work, we mentioned that Problem (\ref{P}) is an example of a multiple objective linear program. Evans and Steuer have established that a feasible solution to such a program is efficient if and only if it optimizes a weighted linear combination of the original criteria for some set of positive weights \citep[Corollary 1.4]{evans1973revised}. Their result will be instrumental in characterizing the efficient solutions of Problem (\ref{P}).

\begin{lemma}
\label{lem2}
Let $x^{\circ} \in X$. Then $x^{\circ} \in X_E$ if and only if there exists $\lambda \in \Lambda^{>}$ such that $x^{\circ}$ is an optimal solution of the ordinary linear program
\begin{equation*}
\begin{aligned}
\max \quad & (\lambda^{T}C)x = \sum_{j = 1}^{n}\Biggl(\sum_{i = 1}^{k}\lambda_{i}c_{ij}\Biggr)x_{j}, \textrm{ subject to } x \in X.\\
\end{aligned}
\end{equation*}
\end{lemma}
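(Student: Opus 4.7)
The plan is to invoke the Evans--Steuer characterization of efficient solutions for multiple objective linear programs, which the excerpt explicitly cites as Corollary~1.4 of \cite{evans1973revised}. That corollary states that a feasible point of a multiple objective linear program is efficient if and only if it maximizes some strictly positive weighted sum of the original criteria over the feasible set. The task therefore reduces to verifying that Problem~(\ref{P}) satisfies the hypotheses required for the corollary and then translating its conclusion into the notation used here.

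First I would observe that Problem~(\ref{P}) is indeed a multiple objective linear program in the sense of \cite{evans1973revised}: the feasible region $X$ is a convex polyhedron (the standard simplex, described by one equality and $n$ nonnegativity constraints), and every criterion $c_i^{T}x$ is linear on $\mathbb{R}^{n}$. These are precisely the structural assumptions on which the Evans--Steuer corollary is built, so no additional argument is needed before applying it.

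Having verified applicability, I would state the conclusion: $x^{\circ} \in X_E$ if and only if there exists $\lambda \in \Lambda^{>}$ such that $x^{\circ}$ is optimal for $\max\{\lambda^{T}Cx : x \in X\}$. To recover the explicit form that appears in the lemma, I would simply expand the scalar $\lambda^{T}Cx$ componentwise,
\begin{equation*}
\lambda^{T}Cx \;=\; \sum_{j=1}^{n}\Bigl(\sum_{i=1}^{k}\lambda_{i}c_{ij}\Bigr)x_{j},
\end{equation*}
which is precisely the objective displayed in the statement.

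There is, in truth, no substantive obstacle in this argument; the entire content of the lemma is packaged in the cited corollary, and the only work is checking its hypotheses. The one point that warrants care is to keep the weights confined to $\Lambda^{>}$ (strictly positive components) rather than merely to the nonnegative orthant, since the strict positivity is what secures the equivalence in both directions. As this is exactly the form in which Evans and Steuer's corollary is stated, the translation into our notation is immediate.
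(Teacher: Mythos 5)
Your proposal is correct and matches the paper's treatment exactly: the paper states Lemma \ref{lem2} as a direct specialization of Corollary 1.4 of \cite{evans1973revised}, with no further argument beyond noting that Problem (\ref{P}) is a multiple objective linear program. Your additional care about keeping the weights in $\Lambda^{>}$ is appropriate but adds nothing beyond what the citation already supplies.
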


Put otherwise, $x^{\circ} \in X$ is an efficient solution of Problem (\ref{P}) if and only if there exists some $\lambda \in \Lambda^{>}$ such that $x^{\circ}$ is an optimal solution of Problem ($P_{\lambda^{T}C}$) as defined in Lemma \ref{lem1}. Given a $\lambda \in \mathbb{R}^{k}$, let $S(\lambda)$ denote the optimal solution set of Problem ($P_{\lambda^{T}C}$). We know from Lemma \ref{lem1} that $S(\lambda)$ is entirely determined by the relative ranking of the coefficients $(\lambda^{T}C)_j$, $j \in J$, in the following way: if the coefficients are uniquely maximized at a component $j \in J$, then $S(\lambda) = \{\bm{e_j}\}$; alternatively, if this maximum is attained not in one but $2 \leq p < n$ components $j_1, ..., j_p \in J$, then $S(\lambda) = \{x_{j_1}\bm{e_{j_1}} + ... + x_{j_p}\bm{e_{j_p}}: \sum_{i = 1}^{p}x_{j_i} = 1,\ x_{j_i} > 0\}$; and if all coefficients are equal, then $S(\lambda) = X$.   

This naturally leads us to distinguish three classes of points in $X$: those $x$ with $x_j > 0$ for all $j \in J$; those $x$ with $x_{j_i} > 0$ and $\sum_{i = 1}^{p}x_{j_i} = 1$ for only a subset of $p = 2, ..., n-1$ indices $j_1, ..., j_p \in J$; and those $x$ with $x_j = 1$ for a single $j \in J$. We shall refer to the first class of points as the \textit{randomized}, to the second as the \textit{partially randomized}, and to the third as the \textit{deterministic}\footnote{The randomized-deterministic terminology is motivated by the comment in Section \ref{sec1} to the effect that the elements of $X$ define probability distributions over $J$.}. The three classes obviously exhaust all of $X$, and it is clear from their definitions that every member of $X$ belongs to exactly one of them. 


Consider any $x^{\circ} \in X_E$, and let $\lambda \in \Lambda^{>}$ be a weight factor such that $x^{\circ} \in S(\lambda)$. If $x^{\circ}$ is randomized, then of the three cases that have been discussed, only the third, namely that $S(\lambda) = X$, is possible, which implies that $(\lambda^{T}C)_1 = (\lambda^{T}C)_n$. If $x^{\circ}$ is partially randomized w.r.t.\ some $p = 2, ..., n-1$ components $j_1, ..., j_p$, then the preceding discussion makes it evident that we must have $S(\lambda) = \{x_{j_1}\bm{e_{j_1}} + ... + x_{j_p}\bm{e_{j_p}}: \sum_{i = 1}^{p}x_{j_i} = 1,\ x_{j_i} > 0\}$. In this case, $(\lambda^{T}C)_{j_1} = ... = (\lambda^{T}C)_{j_p}$ and $(\lambda^{T}C)_{j_i} > (\lambda^{T}C)_{j}$ for each $i = 1, ..., p$ and $j \in J \setminus \{j_1, ..., j_p\}$. Finally, if $x^{\circ}$ is deterministic with respect to a component $j$, then we have shown that it is the only optimal solution to Problem ($P_{\lambda^{T}C}$), and that $(\lambda^{T}C)_j > (\lambda^{T}C)_{j'}$ for all $j' \in J \setminus \{j\}$. 

The conditions for efficiency that were established in the preceding paragraph are not only necessary but also sufficient. Take only the case of a partially randomized $x^{\circ}$. Suppose the indices corresponding to the positive components of $x^{\circ}$ are $j_1$ through $j_p$, and suppose that there exists $\lambda \in \Lambda^{>}$ such that $(\lambda^{T}C)_{j_1} = ... = (\lambda^{T}C)_{j_p}$ and $(\lambda^{T}C)_{j_i} > (\lambda^{T}C)_{j}$ for each $i = 1, ..., p$ and $j \in J \setminus \{j_1, ..., j_p\}$. Then, from Lemma \ref{lem1}, $x^{\circ}$ is an optimal solution to Problem ($P_{\lambda^{T}C}$). Since $\lambda \in \Lambda^{>}$, it follows from Lemma \ref{lem2} that $x^{\circ} \in X_E$.

These conclusions are collected in Theorem \ref{thm1}.

\begin{theorem}
\label{thm1}
Let $x^{\circ} \in X$.
\begin{itemize}
\item[(1)] If $x^{\circ}$ is randomized, then $x^{\circ} \in X_E$ if and only if there exists $\lambda \in \Lambda^{>}$ such that $J^{*}(\lambda^{T}C) = J$.

\item[(2)] If $x^{\circ}$ is partially randomized w.r.t.\ some components $j_1, ..., j_p \in J$, then $x^{\circ} \in X_E$ if and only if there exists $\lambda \in \Lambda^{>}$ such that $J^{*}(\lambda^{T}C) = \{j_1, ..., j_p\}$.

\item[(3)] If $x^{\circ}$ is deterministic with respect to some component $j \in J$, then $x^{\circ} \in X_E$ if and only if there exists $\lambda \in \Lambda^{>}$ such that $J^{*}(\lambda^{T}C) = \{j\}$.
\end{itemize}
\end{theorem}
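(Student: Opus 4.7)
The strategy is to combine Lemma \ref{lem2} with Lemma \ref{lem1}. Lemma \ref{lem2} reduces efficiency of $x^{\circ}$ to the existence of $\lambda \in \Lambda^{>}$ such that $x^{\circ} \in S(\lambda)$, the optimal set of the scalarized linear program $P_{\lambda^{T}C}$. Lemma \ref{lem1} then describes $S(\lambda)$ as one of three mutually exclusive sets, each keyed to the form of $J^{*}(\lambda^{T}C)$: the whole simplex $X$ when $J^{*}(\lambda^{T}C) = J$, the relatively open face spanned by $\{\bm{e_{j_1}}, \ldots, \bm{e_{j_p}}\}$ when $J^{*}(\lambda^{T}C) = \{j_1, \ldots, j_p\}$ with $2 \leq p \leq n-1$, or the single vertex $\{\bm{e_j}\}$ when $J^{*}(\lambda^{T}C) = \{j\}$. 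Each part of the theorem is then obtained by matching the classification of $x^{\circ}$ to the only shape of $S(\lambda)$ compatible with it.

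For the sufficiency direction in each of (1), (2), (3), I would take the hypothesized $\lambda$, apply Lemma \ref{lem1} to read off $S(\lambda)$, verify directly that $x^{\circ} \in S(\lambda)$ (trivially in (1) and (3); by matching the positive support to the index set in (2)), and conclude $x^{\circ} \in X_E$ via Lemma \ref{lem2}. For the necessity direction, I would fix a witness $\lambda \in \Lambda^{>}$ from Lemma \ref{lem2} and argue by elimination over the three shapes of $S(\lambda)$. A randomized $x^{\circ}$ has all components strictly positive, which rules out $S(\lambda)$ being a singleton vertex or a relatively open proper face; the only option left is $S(\lambda) = X$, forcing $J^{*}(\lambda^{T}C) = J$. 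A partially randomized $x^{\circ}$ is not a vertex, and if it lies in a partially randomized face then that face's index set must coincide with its positive support, pinning $J^{*}(\lambda^{T}C)$ to $\{j_1, \ldots, j_p\}$. A deterministic $x^{\circ} = \bm{e_j}$ cannot lie in a proper open face, so $S(\lambda) = \{\bm{e_j}\}$ and $J^{*}(\lambda^{T}C) = \{j\}$.

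The main obstacle is that, a priori, the witness $\lambda$ may yield $S(\lambda) = X$ even when $x^{\circ}$ is partially randomized or deterministic, producing $J^{*}(\lambda^{T}C) = J$ rather than the asserted smaller set. Rigorously concluding the necessity in (2) and (3) therefore requires showing that one can always choose or refine the witness $\lambda$ so that $S(\lambda)$ is exactly the minimal face of $X$ containing $x^{\circ}$. The natural approach is to exploit the freedom in $\Lambda^{>}$: if $\lambda^{T}C$ is constant on $X$, one perturbs $\lambda$ within $\Lambda^{>}$ along directions that preserve the equalities required by the positive support of $x^{\circ}$ while breaking any extraneous ones, producing a refined witness with the desired $J^{*}(\lambda^{T}C)$.
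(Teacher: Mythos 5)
Your overall route---Lemma \ref{lem2} to produce a witness $\lambda \in \Lambda^{>}$ with $x^{\circ} \in S(\lambda)$, then Lemma \ref{lem1} to read off the possible shapes of $S(\lambda)$ and eliminate those incompatible with the support of $x^{\circ}$---is exactly the paper's argument, and your sufficiency directions and the necessity of part (1) are correct. The genuine problem is the one you flag yourself: in parts (2) and (3) the witness may satisfy $J^{*}(\lambda^{T}C) = J$, so that $S(\lambda) = X$ contains $x^{\circ}$ without pinning $J^{*}(\lambda^{T}C)$ to the support of $x^{\circ}$. Your proposed repair---perturb $\lambda$ inside $\Lambda^{>}$ so as to preserve the equalities indexed by the support while breaking the extraneous ones---cannot be carried out in general, because the extraneous equalities may hold identically in $\lambda$. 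Take $n = 3$, $k = 2$, $c_1 = (1,0,0)^{T}$, $c_2 = (-1,0,0)^{T}$ (no single point maximizes both criteria, so the paper's standing assumption holds). Then $\lambda^{T}C = (\lambda_1 - \lambda_2, 0, 0)$, so $J^{*}(\lambda^{T}C)$ is $\{1\}$, $\{2,3\}$ or $J$, but never $\{1,2\}$; yet $\lambda = (1,1)^{T}$ gives $J^{*}(\lambda^{T}C) = J$, hence $X_E = X$, and the point $x^{\circ} = (1/2, 1/2, 0)$, partially randomized w.r.t.\ $\{1,2\}$, is efficient. No perturbation can produce a witness with $J^{*}(\lambda^{T}C) = \{1,2\}$, so the step you defer is not merely unproved but false as a general claim.

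You should also know that the paper's own proof has the same soft spot: it asserts that for a partially randomized (resp.\ deterministic) efficient $x^{\circ}$ ``we must have'' $S(\lambda) = Y(j_1, \ldots, j_p)$ (resp.\ $S(\lambda) = \{\bm{e_j}\}$) without excluding the possibility $S(\lambda) = X$. The honest way to close your elimination argument is a case distinction rather than a perturbation: if some witness yields $S(\lambda) = X$, then by Remark \ref{rem1} and Corollary \ref{cor1} one is in the situation $X_E = X$, where the stated equivalences in (2) and (3) can genuinely fail (as above); if no $\lambda \in \Lambda^{>}$ yields $J^{*}(\lambda^{T}C) = J$---equivalently, $X_E \neq X$---then $S(\lambda) = X$ is excluded, your support-matching argument does pin down $J^{*}(\lambda^{T}C)$, and the necessity in (2) and (3) follows. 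So replace the perturbation step by this dichotomy, and note the extra hypothesis ($X_E \neq X$) that it reveals to be needed for the necessity halves of (2) and (3).
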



This characterization has two important corollaries. First, if $x^{\circ}$ is a randomized efficient solution to Problem (\ref{P}), then the theorem assures of the existence of a $\lambda \in \Lambda^{>}$ for which $J^{*}(\lambda^{T}C) = J$. By Lemma \ref{lem1}, this would mean that every $x \in X$ is an optimal solution to Problem ($P_{\lambda^{T}C}$). Per Lemma \ref{lem2}, therefore, every $x \in X$ would be an efficient solution to Problem (\ref{P}). 

Secondly, if $x^{\circ}$ is a partially randomized efficient solution, then the fact that there is a weight factor $\lambda \in \Lambda^{>}$ satisfying $J^{*}(\lambda^{T}C) = \{j_1, ..., j_p\}$ implies, again by virtue of Lemmas \ref{lem1} and \ref{lem2}, that every point in the set \begin{equation*}Y(j_1, ..., j_p) = \{x_{j_1}\bm{e_{j_1}} + ... + x_{j_p}\bm{e_{j_p}}: \sum_{i = 1}^{p}x_{j_i} = 1,\ x_{j_i} > 0\}\end{equation*}
belongs to $X_E$. Notice that $Y(j_1, ..., j_p)$ is the set of all points that are partially randomized w.r.t.\ $j_1, ..., j_p$, and therefore that $x^{\circ} \in Y(j_1, ..., j_p)$.

\begin{corollary}
\label{cor1}
Assume $x^{\circ} \in X$ is a randomized point. Then $x^{\circ} \in X_E$ if and only if $X_E = X$. 
\end{corollary}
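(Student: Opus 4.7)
The corollary is a biconditional, so I would handle the two directions separately. The backward implication is immediate: if $X_E = X$, then since $x^{\circ} \in X$ by hypothesis, we get $x^{\circ} \in X_E$ without using that $x^{\circ}$ is randomized. All the content of the corollary is in the forward direction, and essentially all the ingredients have already been stated in the paragraph preceding the corollary, so the proof should be very short.

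For the forward direction, my plan is to chain together Theorem \ref{thm1}(1), Lemma \ref{lem1}, and Lemma \ref{lem2}. Assume $x^{\circ}$ is randomized and $x^{\circ} \in X_E$. Theorem \ref{thm1}(1) then yields some $\lambda \in \Lambda^{>}$ with $J^{*}(\lambda^{T}C) = J$. Apply the ``only if'' part of Lemma \ref{lem1} for situation (i) with $d = \lambda^{T}C$: this forces $S(\lambda) = X$, i.e., every feasible point is an optimal solution of Problem $(P_{\lambda^{T}C})$. Since $\lambda \in \Lambda^{>}$, Lemma \ref{lem2} applied to each $x \in X$ gives $x \in X_E$, so $X \subseteq X_E$. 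Combining with the trivial inclusion $X_E \subseteq X$ gives $X_E = X$.

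I do not anticipate any real obstacle here: the corollary is essentially a packaging of facts already derived in the discussion leading up to Theorem \ref{thm1}. The only thing to be careful about is to use the \emph{only if} portion of Lemma \ref{lem1} (a given $\lambda$ with $J^{*}(\lambda^{T}C) = J$ forces $S(\lambda) = X$), rather than the easier ``if'' direction. Both directions of Lemma \ref{lem1} are stated, so nothing extra is needed.
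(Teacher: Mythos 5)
Your proposal is correct and follows essentially the same route as the paper: the forward direction is exactly the chain Theorem \ref{thm1}(1) $\Rightarrow$ existence of $\lambda \in \Lambda^{>}$ with $J^{*}(\lambda^{T}C) = J$ $\Rightarrow$ (Lemma \ref{lem1}) $S(\lambda) = X$ $\Rightarrow$ (Lemma \ref{lem2}) $X \subseteq X_E$, which is precisely the argument the paper gives in the paragraph preceding the corollary. The only minor quibble is one of labeling: the direction of Lemma \ref{lem1} you need is the one asserting that $J^{*}(d) = J$ \emph{implies} situation (i), i.e., the sufficiency half of ``situation (i) arises if and only if $J^{*}(d) = J$,'' but this does not affect the validity of your argument.
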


\begin{corollary}
\label{cor2}
Assume $x^{\circ} \in X$ is partially randomized w.r.t.\ $j_1, ..., j_p \in J$, $p = 2, ..., n-1$. Then $x^{\circ} \in X_E$ if and only if $Y(j_1, ..., j_p) \subseteq X_E$.
\end{corollary}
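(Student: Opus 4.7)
The plan is to prove each direction in turn, leveraging the machinery already established in Theorem~\ref{thm1} and Lemmas~\ref{lem1}--\ref{lem2}. The statement is essentially the combinatorial corollary of the weight-factor characterization, so I expect the argument to be short.

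For the forward direction, assume $x^{\circ} \in X_E$. Since $x^{\circ}$ is partially randomized with respect to $j_1, \ldots, j_p$, part (2) of Theorem~\ref{thm1} supplies a weight vector $\lambda \in \Lambda^{>}$ with $J^{*}(\lambda^{T}C) = \{j_1, \ldots, j_p\}$. I would then invoke situation (iii) of Lemma~\ref{lem1} applied to $d = \lambda^{T}C$, which identifies the optimal solution set of Problem $(P_{\lambda^{T}C})$ precisely as $Y(j_1, \ldots, j_p)$. Because the weights are strictly positive, Lemma~\ref{lem2} then tells us that every element of $Y(j_1, \ldots, j_p)$ is efficient, i.e.\ $Y(j_1, \ldots, j_p) \subseteq X_E$.

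For the reverse direction, the crucial observation is simply that $x^{\circ}$ itself lies in $Y(j_1, \ldots, j_p)$: being partially randomized with respect to those indices means exactly that $x^{\circ}$ can be written as $\sum_{i=1}^{p} x^{\circ}_{j_i}\bm{e_{j_i}}$ with strictly positive $x^{\circ}_{j_i}$ summing to one. Hence if $Y(j_1, \ldots, j_p) \subseteq X_E$, then $x^{\circ} \in X_E$.

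I do not anticipate any genuine obstacle; the only care needed is to verify that the sets $Y(j_1, \ldots, j_p)$ appearing in Lemma~\ref{lem1}(iii), in Theorem~\ref{thm1}(2), and in the statement of the corollary coincide, and that the membership $x^{\circ} \in Y(j_1, \ldots, j_p)$ follows directly from the definition of partial randomization. Both are immediate from the notation introduced before Theorem~\ref{thm1}, so the proof will be essentially a two- to three-line consolidation of Lemmas~\ref{lem1}, \ref{lem2} and Theorem~\ref{thm1}(2).
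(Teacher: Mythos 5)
Your proposal is correct and follows the same route as the paper: the forward direction combines Theorem \ref{thm1}(2) with Lemma \ref{lem1}(iii) and Lemma \ref{lem2} to conclude that all of $Y(j_1, \ldots, j_p)$ is efficient, and the reverse direction rests on the observation that $x^{\circ} \in Y(j_1, \ldots, j_p)$. This is exactly the argument the paper gives in the paragraph preceding the corollary.
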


\begin{remark}
\label{rem1}
From Theorem \ref{thm1} and Corollary \ref{cor1} we may draw the further conclusion that $X_E = X$ if and only if there exists $\lambda \in \Lambda^{>}$ satisfying $J^{*}(\lambda^{T}C) = J$.
\end{remark}

Although the conditions in Theorem \ref{thm1} may appear rather formidable, they provide the basis for simple computational tests for ascertaining the membership of a point $x^{\circ} \in X$ in the efficient solution set of Problem (\ref{P}). We derive these tests in the next three propositions. 

\begin{proposition}
\label{prop1}
Consider the single-objective linear program given by
\begin{equation*}
\label{$T^0$}
\tag{$T^0$}
\begin{aligned}
\max \quad & \epsilon\\
\textrm{subject to} \quad & (\lambda^{T}C)_j = (\lambda^{T}C)_{j+1},\ \forall j = 1, ..., n-1,\\
& \lambda_i \geq \epsilon, \ \forall i = 1, ..., k,\\
  &\epsilon \leq 1,\\
\end{aligned}
\end{equation*}
where $\epsilon$, $\lambda_1$, ..., $\lambda_k \in \mathbb{R}$. Then $X_E = X$ if and only if Problem {\rm (\ref{$T^0$})} has a positive optimal value $\epsilon^{*} > 0$.
\end{proposition}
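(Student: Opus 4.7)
The plan is to reduce the proposition directly to Remark \ref{rem1}, which already asserts that $X_E = X$ if and only if some $\lambda \in \Lambda^{>}$ makes $J^{*}(\lambda^{T}C) = J$. The proposition then amounts to translating this condition into the language of $(T^0)$: the $n-1$ chained equalities express that all coordinates of $\lambda^{T}C$ coincide, while the constraints $\lambda_i \geq \epsilon$ together with $\epsilon > 0$ express that $\lambda \in \Lambda^{>}$.

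First I would check that $(T^0)$ is well-posed: $(\lambda, \epsilon) = (0, 0)$ is feasible, and the bound $\epsilon \leq 1$ keeps the objective bounded, so an optimum $\epsilon^{*} \in [0, 1]$ is attained. For sufficiency, I would take an optimal pair $(\lambda^{*}, \epsilon^{*})$ with $\epsilon^{*} > 0$: each $\lambda_i^{*}$ is then at least $\epsilon^{*} > 0$, so $\lambda^{*} \in \Lambda^{>}$; the equality constraints force $J^{*}(\lambda^{*T}C) = J$; and Remark \ref{rem1} yields $X_E = X$.

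For necessity, given $X_E = X$, Remark \ref{rem1} supplies some $\hat{\lambda} \in \Lambda^{>}$ with $J^{*}(\hat{\lambda}^{T}C) = J$. I would pair it with $\hat{\epsilon} = \min(\min_i \hat{\lambda}_i, 1) > 0$, which automatically satisfies $\hat{\lambda}_i \geq \hat{\epsilon}$ for every $i$ and $\hat{\epsilon} \leq 1$, producing a feasible pair with positive objective. The only real obstacle I anticipate is the slightly unmotivated upper bound $\epsilon \leq 1$: since the equality constraints are homogeneous in $\lambda$, the LP would be unbounded above without such a cap whenever it is ever positively feasible, so the bound is there purely to guarantee that the optimum is attained; the choice of $\hat{\epsilon}$ above shows that it never gets in the way of exhibiting a positive-value feasible solution whenever one ought to exist.
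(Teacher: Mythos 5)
Your proof is correct and follows essentially the same route as the paper: both directions reduce to Remark \ref{rem1} after checking that Problem (\ref{$T^0$}) is consistent and bounded, with sufficiency read off directly from an optimal solution with $\epsilon^{*} > 0$. The only cosmetic difference is in the necessity step, where the paper rescales $\lambda$ by its smallest component so that $\epsilon = 1$ is feasible (a normalization it reuses later in Lemma \ref{lem3} to show $\epsilon^{*} > 0$ forces $\epsilon^{*} = 1$), whereas you cap $\hat{\epsilon}$ at $\min(\min_{i}\hat{\lambda}_{i}, 1)$; both choices produce a feasible point with positive objective value.
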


\begin{proof}
As a preliminary, note that Problem (\ref{$T^0$}) is consistent (the solution where all the variables are set equal to zero is feasible) and bounded (by $1$, for example). By the fundamental theorem of linear programming, Problem (\ref{$T^0$}) has therefore an optimal value $\epsilon^{*} \in \mathbb{R}$.

We begin with the necessity portion of the proposition. Suppose $X_E = X$. By Remark \ref{rem1}, there exist $\lambda_1, ..., \lambda_{k} > 0$ such that $J^{*}(\lambda^{T}C) = J$. By the definition of $J^{*}(\lambda^{T}C)$, this means that $(\lambda^{T}C)_1 = ... = (\lambda^{T}C)_n$. Let $\lambda_{\min} = \min\{\lambda_1, ..., \lambda_k\}$, and $\epsilon = 1$. Bearing in mind that $\lambda_{\min} > 0$, set $\lambda' = \frac{\lambda}{\lambda_{\min}}$. It is then evident that $(\lambda', \epsilon)$ is a feasible solution to Problem {\rm (\ref{$T^0$})}. Furthermore, this solution achieves an objective function value $\epsilon$ equal to one. Thus, $\epsilon^{*} \geq 1 > 0$.

Conversely, suppose that $\epsilon^{*} > 0$. Choose $\lambda$ to be a $k$-dimensional vector such that $(\lambda, \epsilon^{*})$ is feasible for Problem {\rm (\ref{$T^0$})}. Then $(\lambda^{T}C)_1 = ... = (\lambda^{T}C)_n$, and $\lambda_i \geq \epsilon^{*} > 0$ for each $i = 1, ..., k$. By Remark \ref{rem1}, therefore, $X_E = X$. 
\end{proof}

\begin{proposition}
\label{prop2}
Given a point $x^{\circ} \in X$ partially randomized w.r.t.\ components $j_1, ..., j_p$, $p = 2, ..., n-1$, consider the single-objective linear program given by 
\begin{equation*}
\label{$T^1$}
\tag{$T^1(x^{\circ})$}
\begin{aligned}
\max \quad & \epsilon_m\\
\textrm{subject to} \quad & (\lambda^{T}C)_{j_i} = (\lambda^{T}C)_{j_{i+1}},\ \forall i = 1, ..., p-1, \\
& (\lambda^{T}C)_{j_1} - (\lambda^{T}C)_{j} \geq \epsilon_{j},\ \forall j \in J \setminus \{j_1, ..., j_p\},\\
& \lambda_i \geq \epsilon, \ \forall i = 1, ..., k,\\
  &\epsilon_m \leq \epsilon_{j},\ \forall j \in J \setminus \{j_1, ..., j_p\},\\
  &\epsilon_m \leq \epsilon,\\  
    &\epsilon \leq 1,\\
\end{aligned}
\end{equation*}
where $\epsilon, \epsilon_m, \epsilon_{j}, \lambda_{i} \in \mathbb{R}$, for each $j \in J \setminus \{j_1, ..., j_p\}$ and $i = 1, ..., k$. Then $x^{\circ} \in X_E$ if and only if Problem {\rm (\ref{$T^1$})} has a positive optimal value $\epsilon_{m}^{*} > 0$.
\end{proposition}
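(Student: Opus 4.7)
The plan is to mirror the strategy used in the proof of Proposition~\ref{prop1}, replacing Remark~\ref{rem1} with part~(2) of Theorem~\ref{thm1}. As a preliminary, I would verify that Problem $T^1(x^{\circ})$ is consistent (the assignment where $\lambda$, $\epsilon$, $\epsilon_m$, and every $\epsilon_j$ are set to zero satisfies all the constraints) and bounded above (the chain $\epsilon_m \leq \epsilon \leq 1$ caps the objective by $1$). By the fundamental theorem of linear programming an optimal value $\epsilon_{m}^{*} \in \mathbb{R}$ therefore exists.

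For the necessity direction, suppose $x^{\circ} \in X_E$. By Theorem~\ref{thm1}(2) there is $\lambda \in \Lambda^{>}$ with $J^{*}(\lambda^{T}C) = \{j_1, \ldots, j_p\}$, which unpacks as $(\lambda^{T}C)_{j_1} = \cdots = (\lambda^{T}C)_{j_p}$ together with $(\lambda^{T}C)_{j_1} > (\lambda^{T}C)_{j}$ for every $j \in J \setminus \{j_1, \ldots, j_p\}$. Setting $\lambda_{\min} = \min_i \lambda_i > 0$ and rescaling to $\lambda' = \lambda / \lambda_{\min}$ preserves both the equalities and the strict inequalities while guaranteeing $\lambda'_{i} \geq 1$ for every $i$. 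I would then take $\epsilon = 1$, define $\epsilon_j = (\lambda'^{T}C)_{j_1} - (\lambda'^{T}C)_{j} > 0$ for each $j \notin \{j_1, \ldots, j_p\}$, and set $\epsilon_m = \min\bigl\{\epsilon,\ \min_{j \notin \{j_1, \ldots, j_p\}} \epsilon_j\bigr\} > 0$. By construction this tuple is feasible for $T^1(x^{\circ})$ with strictly positive objective, so $\epsilon_{m}^{*} > 0$.

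For the sufficiency direction, suppose $\epsilon_{m}^{*} > 0$ and fix a feasible tuple $(\lambda, \epsilon, \epsilon_{m}^{*}, \{\epsilon_j\})$ realizing this value. The constraints $\lambda_i \geq \epsilon$ and $\epsilon_{m}^{*} \leq \epsilon$ give $\lambda_i \geq \epsilon_{m}^{*} > 0$, hence $\lambda \in \Lambda^{>}$. The first block of constraints yields $(\lambda^{T}C)_{j_1} = \cdots = (\lambda^{T}C)_{j_p}$, and the inequalities $(\lambda^{T}C)_{j_1} - (\lambda^{T}C)_{j} \geq \epsilon_j \geq \epsilon_{m}^{*} > 0$ for $j \in J \setminus \{j_1, \ldots, j_p\}$ deliver $(\lambda^{T}C)_{j_1} > (\lambda^{T}C)_{j}$ there. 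Together these give $J^{*}(\lambda^{T}C) = \{j_1, \ldots, j_p\}$, and Theorem~\ref{thm1}(2) then produces $x^{\circ} \in X_E$.

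The main obstacle, as I see it, is not conceptual but purely bookkeeping with the cascade of slack variables $\epsilon$, $\epsilon_m$, and $\{\epsilon_j\}$: one has to confirm that the nested inequalities of $T^1(x^{\circ})$ faithfully encode the condition ``$J^{*}(\lambda^{T}C) = \{j_1, \ldots, j_p\}$ for some strictly positive $\lambda$'' rather than admitting degenerate feasible tuples in which some $\lambda_i$ or some gap $(\lambda^{T}C)_{j_1} - (\lambda^{T}C)_{j}$ is allowed to vanish. This is precisely what forcing $\epsilon_m^{*} > 0$ at the top of the chain rules out.
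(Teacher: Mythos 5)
Your proposal is correct and follows essentially the same route as the paper's own proof: the same rescaling of $\lambda$ by its smallest component in the necessity direction, the same choice $\epsilon = 1$, $\epsilon_{j} = (\lambda'^{T}C)_{j_1} - (\lambda'^{T}C)_{j}$, $\epsilon_m$ as the minimum, and the same unwinding of the constraint chain from $\epsilon_{m}^{*} > 0$ in the sufficiency direction, invoking Theorem~\ref{thm1}(2) at both ends. The only (harmless) difference is that you verify consistency and boundedness of $T^1(x^{\circ})$ explicitly, whereas the paper defers to the analogous argument for $T^0$.
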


\begin{proof}
Problem (\ref{$T^1$}) admits an optimal value for the same reasons as (\ref{$T^0$}). Suppose $x^{\circ} \in X_E$. According to Theorem \ref{thm1}, there exist $\lambda_1, ..., \lambda_k > 0$ satisfying $(\lambda^{T}C)_{j_1} = ... = (\lambda^{T}C)_{j_p}$ and $(\lambda^{T}C)_{j_i} > (\lambda^{T}C)_{j}$ for each $i = 1, ..., p$ and $j \in J \setminus \{j_1, ..., j_p\}$. Let $\lambda_{\min}$ signify the smallest component in $\lambda$, and set $\lambda' = \frac{\lambda}{\lambda_{\min}}$, having noted that $\lambda_{\min} > 0$. Notice that for each $i = 1, ..., k$, $\lambda'_i \geq 1$. Let $\epsilon = 1$. Now, if for each $j \in J \setminus \{j_1, ..., j_p\}$ we let 
\begin{equation*}
\epsilon_{j} = ((\lambda')^{T}C)_{j_1} - ((\lambda')^{T}C)_{j} > 0,
\end{equation*}
then we let $\epsilon_m = \min\{\epsilon_{j}, \epsilon: j \neq j_1, ..., j_p\}$, we obtain a feasible solution to Problem (\ref{$T^1$}) with an objective function value of $\epsilon_{m} > 0$. Consequently $\epsilon_{m}^{*} > 0$. This demonstrates the \textit{only if} portion of the proposition.

To prove the \textit{if} portion, assume that $\epsilon_{m}^{*} > 0$. 
Consider any optimal solution to {\rm (\ref{$T^1$})}, say $(\lambda, \epsilon_{j}, \epsilon, \epsilon_{m}^{*})$ where $\epsilon_{j} \in \mathbb{R}^{n-p}$. The constraints of the problem dictate that \begin{equation*}\epsilon_{m}^{*} \leq \min\{\epsilon_{j}, \epsilon: j \neq j_1, ..., j_p\}.\end{equation*}
But since $\epsilon_{m}^{*}$ is positive by assumption, the above inequality implies that $\epsilon_{j} > 0$, ergo $(\lambda^{T}C)_{j_1} = ... = (\lambda^{T}C)_{j_p} > (\lambda^{T}C)_{j}$ for all $j \in J \setminus \{j_1, ..., j_p\}$, and that $\epsilon > 0$, hence $\lambda \in \Lambda^{>}$, again from the constraints of Problem (\ref{$T^1$}). In other words, $J^{*}(\lambda^{T}C) = \{j_1, ..., j_p\}$ and $\lambda \in \Lambda^{>}$. Since $x^{\circ}$ is partially randomized w.r.t.\ $j_1, ..., j_p$, it follows from Theorem \ref{thm1} that $x^{\circ} \in X_E$.
\end{proof}

\begin{proposition}
\label{prop3}
Given a point $x^{\circ}$ deterministic with respect to component $j$, consider the single-objective linear program given by
\begin{equation*}
\label{$T^2$}
\tag{$T^2(x^{\circ})$}
\begin{aligned}
\max \quad & \epsilon_m\\
\textrm{subject to} \quad & (\lambda^{T}C)_j - (\lambda^{T}C)_{j'} \geq \epsilon_{j'},\ \forall j' \in J \setminus \{j\},\\
& \lambda_i \geq \epsilon, \ \forall i = 1, ..., k,\\
  &\epsilon_m \leq \epsilon_{j'},\ \forall j' \in J \setminus \{j\},\\
  &\epsilon_m \leq \epsilon,\\  
    &\epsilon \leq 1,\\
\end{aligned}
\end{equation*}
where $\epsilon, \epsilon_m$, $\epsilon_{j'}, \lambda_i \in \mathbb{R}$, for each $j' \in J \setminus \{j\}$ and $i = 1, ..., k$. Then $x^{0} \in X_E$ if and only if Problem {\rm (\ref{$T^2$})} has a positive optimal value $\epsilon_{m}^{*} > 0$.
\end{proposition}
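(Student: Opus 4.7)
The plan is to mirror the proof of Proposition \ref{prop2} almost verbatim, with part (3) of Theorem \ref{thm1} playing the role that part (2) played there. As a preliminary, I would observe that Problem (\ref{$T^2$}) is consistent (setting every variable to zero is feasible) and bounded above by $1$ (via the last constraint), so by the fundamental theorem of linear programming it admits an optimal value $\epsilon_m^{*} \in \mathbb{R}$.

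For the \emph{only if} direction, I would assume $x^{\circ} \in X_E$ and invoke Theorem \ref{thm1}(3) to obtain $\lambda_1, \dots, \lambda_k > 0$ satisfying $(\lambda^{T}C)_j > (\lambda^{T}C)_{j'}$ for every $j' \in J \setminus \{j\}$. Rescaling by $\lambda' = \lambda/\lambda_{\min}$, where $\lambda_{\min} > 0$ is the smallest component of $\lambda$, ensures $\lambda'_i \geq 1$ for all $i$. Then taking $\epsilon = 1$, $\epsilon_{j'} = ((\lambda')^{T}C)_j - ((\lambda')^{T}C)_{j'} > 0$ for each $j' \neq j$, and $\epsilon_m = \min\{\epsilon, \epsilon_{j'} : j' \in J \setminus \{j\}\} > 0$ yields a feasible solution of (\ref{$T^2$}) with strictly positive objective value, whence $\epsilon_m^{*} > 0$.

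For the converse, I would start from an arbitrary optimal solution $(\lambda, \epsilon_{j'}, \epsilon, \epsilon_m^{*})$ with $\epsilon_m^{*} > 0$. The constraints $\epsilon_m^{*} \leq \epsilon_{j'}$ and $\epsilon_m^{*} \leq \epsilon$ immediately force $\epsilon_{j'} > 0$ for each $j' \neq j$ and $\epsilon > 0$. Combined with the constraints $(\lambda^{T}C)_j - (\lambda^{T}C)_{j'} \geq \epsilon_{j'}$, the positivity of the $\epsilon_{j'}$ yields $(\lambda^{T}C)_j > (\lambda^{T}C)_{j'}$ for every $j' \neq j$, i.e.\ $J^{*}(\lambda^{T}C) = \{j\}$; together with $\lambda_i \geq \epsilon$, the positivity of $\epsilon$ yields $\lambda \in \Lambda^{>}$. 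Applying Theorem \ref{thm1}(3) then concludes $x^{\circ} \in X_E$.

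I do not anticipate any genuine obstacle: the argument is a near-verbatim specialization of the proof of Proposition \ref{prop2} to the case $p = 1$, in which the chain of equalities $(\lambda^{T}C)_{j_1} = \dots = (\lambda^{T}C)_{j_p}$ disappears because the single active component $j$ is trivially equal to itself.
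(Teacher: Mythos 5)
Your proposal is correct and follows exactly the route the paper takes: the paper's own proof simply declares the argument analogous to that of Proposition \ref{prop2}, constructing the same rescaled $\lambda'$, setting $\epsilon = 1$, $\epsilon_{j'} = ((\lambda')^{T}C)_{j} - ((\lambda')^{T}C)_{j'}$, and $\epsilon_m = \min\{\epsilon_{j'}, \epsilon : j' \neq j\}$ for the necessity half, and reusing the sufficiency argument verbatim with Theorem \ref{thm1}(3) in place of part (2). Your write-up just spells out the details the paper leaves implicit.
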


\begin{proof}
The proof is analogous to that of Proposition \ref{prop2}. In the selection of a suitable feasible solution for the necessity portion of the proposition, construct $\lambda'$ in the same manner as above, set $\epsilon = 1$, $\epsilon_{j'} = ((\lambda')^{T}C)_{j} - ((\lambda')^{T}C)_{j'}$ for each $j' \in J \setminus \{j\}$, and choose $\epsilon_m = \min\{\epsilon_{j'}, \epsilon: j' \neq j\}$. The treatment of the sufficiency portion may proceed in exactly the fashion of Proposition \ref{prop2}.
\end{proof}

A procedure for determining whether a point $x^{\circ} \in X$ lies in $X_E$ follows at once from these propositions. We begin by solving Problem (\ref{$T^0$}). If the optimal value is positive, the procedure stops: not only is $x^{\circ}$ an efficient solution of Problem (\ref{P}), but so is the rest of $X$. If not, we ask whether $x^{\circ}$ is randomized. If so, then we have established that $x^{\circ}$ is dominated. If not, we solve Problem (\ref{$T^1$}) or (\ref{$T^2$}), depending on whether $x^{\circ}$ is partially randomized or deterministic. In either case, should the optimal value be positive, we conclude that $x^{\circ}$ is efficient; otherwise we rule it out as dominated.

There are several important practical points to keep in mind. First of all, if a candidate $x^{\circ}$ is partially randomized with respect to, say, $j_1, ..., j_p$, then the outcome of the procedure as it pertains to the status of $x^{\circ}$ is, by Corollary \ref{cor2}, valid for \textit{all} points that are partially randomized w.r.t.\ $j_1, ..., j_p$. This makes it redundant to execute the procedure on a partially randomized point if a point partially randomized w.r.t.\ the same components has already been investigated.



Secondly, as the example of Section \ref{sec3} will highlight, the procedure may be incorporated into a scheme for the generation of $X_E$ or a subset thereof. Obviously, by examining various candidates $x^{\circ}$ for efficiency, we may generate multiple efficient solutions. In particular, by keeping strictly to deterministic points, of which there is a finite number, it should be possible to locate all deterministic efficient solutions. To accomplish this, one would need only solve Problem (\ref{$T^2$}) for each $x^{\circ} = \bm{e_j}$, $j = 1, ..., n$, and record those indices $j$ corresponding to a positive optimal value. This search is guaranteed to return at least one efficient solution, because in a multiple objective linear program such as Problem (\ref{P}), at least one extreme point of the feasible polyhedron corresponds to an efficient solution \citep[Corollary 1.5]{evans1973revised}. It is a straightforward exercise to show that the deterministic points represent the extreme points of $X$.

Finally, the implementation of the procedure is uncomplicated given that the programs to be solved are standard linear programs.    


We have seen that one useful function of Problem (\ref{$T^0$}) is indicating whether $X_E = X$. The next example shows that this pathological situation arises even in nontrivial instances of Problem (\ref{P}).  

\begin{example}
\label{ex1}
In Problem (\ref{P}), let $k = n = 3$ and
\[C = \begin{pmatrix}
1 & 2 & -5\\
2 & 1 & -1 \\
-3 & -2 & 9
\end{pmatrix}.\]
Notice that \begin{equation*}\begin{pmatrix} 1 \\ 2 \\ 1 \end{pmatrix}^{T}C = (2, 2, 2),\end{equation*}
so that for $\lambda = (1, 2, 1)^{T} \in \Lambda^{>}$, \begin{equation*}(\lambda^{T}C)_1 = (\lambda^{T}C)_2 = (\lambda^{T}C)_3 = 2.\end{equation*}

It follows from Remark \ref{rem1} that $X_E = X$. 

\end{example}

Theorem \ref{thm2} points out conditions under which $X_E = X$ for a bicriterion Problem (\ref{P}). Remark \ref{rem2} delineates a class of problems for which these conditions are both sufficient and necessary. Before stating the theorem, we prove a useful lemma that is readily obtained from our proof of Proposition \ref{prop1}.

\begin{lemma}
\label{lem3}
Let $\epsilon^{*}$ represent the optimal value of Problem {\rm (\ref{$T^0$})}. Then $\epsilon^{*} > 0$ if and only if $\epsilon^{*} = 1$.
\end{lemma}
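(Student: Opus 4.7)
The plan is to handle the two directions of the equivalence separately, both being essentially already embedded in the proof of Proposition \ref{prop1}. The reverse implication $\epsilon^{*} = 1 \Rightarrow \epsilon^{*} > 0$ is immediate since $1 > 0$, so the only substantive content is the forward implication.

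For the forward implication, I would assume $\epsilon^{*} > 0$ and argue that $\epsilon^{*} = 1$ in two steps. First, by Proposition \ref{prop1} itself, the assumption $\epsilon^{*} > 0$ is equivalent to $X_E = X$. Then, by Remark \ref{rem1}, there exists $\lambda \in \Lambda^{>}$ with $J^{*}(\lambda^{T}C) = J$, i.e.\ $(\lambda^{T}C)_1 = \cdots = (\lambda^{T}C)_n$. Now I would replay the rescaling step from the necessity portion of the proof of Proposition \ref{prop1}: set $\lambda_{\min} = \min\{\lambda_1, \ldots, \lambda_k\} > 0$ and $\lambda' = \lambda/\lambda_{\min}$. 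Then $\lambda'_i \geq 1$ for every $i$, and the equalities $((\lambda')^{T}C)_j = ((\lambda')^{T}C)_{j+1}$ are preserved, so $(\lambda', \epsilon) = (\lambda', 1)$ is a feasible solution of Problem (\ref{$T^0$}) with objective value $1$. This forces $\epsilon^{*} \geq 1$. Combined with the explicit constraint $\epsilon \leq 1$, which bounds $\epsilon^{*}$ from above by $1$, we conclude $\epsilon^{*} = 1$.

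There is no real obstacle here; the lemma is a bookkeeping observation that the upper bound $\epsilon \leq 1$ in $(T^0)$ is either attained (when $X_E = X$) or the problem is not merely suboptimal but in fact nonpositive. The only care needed is to make sure the rescaling argument is invoked by reference rather than re-derived, and to be explicit that $\lambda_{\min} > 0$ (used to divide). Everything else follows from combining Proposition \ref{prop1} with Remark \ref{rem1} and the feasibility construction already spelled out above.
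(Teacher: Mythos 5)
Your proposal is correct and follows essentially the same route as the paper's own proof: both pass from $\epsilon^{*} > 0$ to $X_E = X$ via Proposition \ref{prop1}, reuse the rescaled $\lambda'$ from the necessity half of that proof to exhibit the feasible point $(\lambda', 1)$, and then invoke the constraint $\epsilon \leq 1$ to pin down $\epsilon^{*} = 1$. The only difference is that you spell out the intermediate appeal to Remark \ref{rem1} explicitly, which the paper leaves implicit.
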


\begin{proof}
Suppose $\epsilon^{*} > 0$. Then $\lambda'$ as constructed in the first half of the proof of Proposition \ref{prop1} is such that $(\lambda', \epsilon = 1)$ is a feasible solution to Problem (\ref{$T^0$}). Thus, $\epsilon^{*} \geq 1$. Since Problem (\ref{$T^0$}) constrains $\epsilon^{*}$ to be bounded above by $1$, it follows that $\epsilon^{*} = 1$. 
\end{proof}

\begin{theorem}
\label{thm2}
Assume that $k = 2$ and that $c_{1, j} \neq c_{1, j+1}$ for each $j = 1, ..., n-1$. If

\begin{equation}\label{eq_1_thm2}\frac{c_{2, 2} - c_{2, 1}}{c_{1, 1} - c_{1, 2}} = \frac{c_{2, 3} - c_{2, 2}}{c_{1, 2} - c_{1, 3}} = \dots = \frac{c_{2, n} - c_{2, n-1}}{c_{1, n-1} - c_{1, n}} > 0,\end{equation}
then $X_E = X$.

\end{theorem}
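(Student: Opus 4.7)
The plan is to invoke Remark \ref{rem1}, which reduces the claim $X_E = X$ to exhibiting a single weight vector $\lambda \in \Lambda^{>}$ for which $J^{*}(\lambda^{T}C) = J$, that is, $(\lambda^{T}C)_1 = (\lambda^{T}C)_2 = \dots = (\lambda^{T}C)_n$. Under the assumption $k=2$, such a $\lambda$ takes the form $(\lambda_1,\lambda_2)^T$, and constructing it amounts to solving a one-parameter problem for the ratio $\lambda_1/\lambda_2$.

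First, I would translate the equal-components requirement into algebraic form. The pairwise equality $(\lambda^{T}C)_j = (\lambda^{T}C)_{j+1}$ becomes
\begin{equation*}
\lambda_1(c_{1,j}-c_{1,j+1}) = \lambda_2(c_{2,j+1}-c_{2,j}), \qquad j = 1,\dots,n-1.
\end{equation*}
Since the hypothesis $c_{1,j}\neq c_{1,j+1}$ rules out division-by-zero, this is equivalent to
\begin{equation*}
\frac{\lambda_1}{\lambda_2} = \frac{c_{2,j+1}-c_{2,j}}{c_{1,j}-c_{1,j+1}}, \qquad j=1,\dots,n-1.
\end{equation*}
In general, the $n-1$ equations above overdetermine the single ratio $\lambda_1/\lambda_2$; the hypothesis (\ref{eq_1_thm2}) is precisely the statement that these $n-1$ required values of the ratio all coincide, and that their common value $r$ is strictly positive.

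Second, I would use this consistency to construct the desired $\lambda$ explicitly: take $\lambda_2 = 1$ and $\lambda_1 = r$. Because $r>0$, this $\lambda$ lies in $\Lambda^{>}$. A direct substitution back into the algebraic form above verifies $(\lambda^{T}C)_j = (\lambda^{T}C)_{j+1}$ for every $j$, whence the $n$ coefficients $(\lambda^{T}C)_1,\dots,(\lambda^{T}C)_n$ are all equal, i.e., $J^{*}(\lambda^{T}C)=J$. Remark \ref{rem1} then gives $X_E = X$, completing the proof.

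There is no genuine obstacle here; the proof is essentially a bookkeeping exercise. The only point requiring a moment's care is the recognition that the hypothesis (\ref{eq_1_thm2}) is not merely a suggestive algebraic identity but exactly the compatibility condition of the linear system for $\lambda_1/\lambda_2$, with positivity of the common ratio corresponding to $\lambda \in \Lambda^{>}$. Once this is noted, Remark \ref{rem1} makes the conclusion immediate.
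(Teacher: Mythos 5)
Your proposal is correct and uses essentially the same construction as the paper: both take $\lambda_2 = 1$ and $\lambda_1$ equal to the common ratio in (\ref{eq_1_thm2}), and both observe that this hypothesis is exactly the compatibility condition making the $n-1$ equations $(\lambda^{T}C)_j = (\lambda^{T}C)_{j+1}$ simultaneously solvable with a positive ratio. The only difference is cosmetic: you appeal directly to Remark \ref{rem1}, whereas the paper routes the same $\lambda$ (after a normalization by $\lambda_{\min}$) through the feasibility test of Problem (\ref{$T^0$}) via Lemma \ref{lem3} and Proposition \ref{prop1}, a detour your version harmlessly avoids.
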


\begin{proof}
Suppose (\ref{eq_1_thm2}) holds. From Lemma \ref{lem3} and Proposition \ref{prop1}, $X_E = X$ if and only if the optimal value of Problem (\ref{$T^0$}) equals one. To prove this theorem, therefore, one need only construct a feasible solution to Problem (\ref{$T^0$}) having an objective function value of one. To this end, let, for example, $\lambda_{2} = 1$ and \begin{equation*}\lambda_{1} = \lambda_{2}\cdot\frac{c_{2, 2} - c_{2, 1}}{c_{1, 1} - c_{1, 2}}.\end{equation*}
Let $\lambda_{\min} = \min\{\lambda_{1}, \lambda_{2}\}$. Since by assumption $\frac{c_{2, 2} - c_{2, 1}}{c_{1, 1} - c_{1, 2}} > 0$, $\lambda_{\min} > 0$. Set $\lambda'_{1} = \frac{\lambda_{1}}{\lambda_{\min}}$ and $\lambda'_{2} = \frac{\lambda_{2}}{\lambda_{\min}}$. Then $\lambda'_{1}, \lambda'_{2} \geq 1$. 

Under our hypotheses, $\lambda_{1}$ and $\lambda_{2}$ satisfy the system of equations given by \begin{equation*}\lambda_{1}(c_{1, j} - c_{1, j+1}) = \lambda_{2}(c_{2, j+1} - c_{2, j}),\ \forall j = 1, ..., n-1,\end{equation*}
hence
\begin{equation*}\lambda'_{1}(c_{1, j} - c_{1, j+1}) = \lambda'_{2}(c_{2, j+1} - c_{2, j}),\ \forall j = 1, ..., n-1,\end{equation*}
a system that may be rewritten as 
\begin{equation*}(\lambda'^{T}C)_j = (\lambda'^{T}C)_{j+1},\ \forall j = 1, ..., n-1,\end{equation*}
where $\lambda' = (\lambda'_1, \lambda'_2)$. This, coupled with the fact that $\lambda'_1, \lambda'_2 \geq 1$, proves that $(\lambda', \epsilon = 1)$ is feasible for Problem (\ref{$T^0$}), whence $X_E = X$.

\end{proof}


\begin{remark}
\label{rem2}
It may be illuminating to interpret Theorem \ref{thm2} within the context of certain bicriterion problems in two dimensions. When $n = 2$, notice that $X = \{(x, 1-x): x \geq 0\}$. Let $f_1(x)$ and $f_2(x)$ denote the criteria $c_1^{T}x$ and $c_{2}^{T}x$, respectively, so that \begin{equation}\label{eq_f1}f_1(x) = (c_{1,1} - c_{1,2})x + c_{1,2}\end{equation} and \begin{equation}\label{eq_f2}f_2(x) = (c_{2,1} - c_{2,2})x + c_{2,2}\end{equation}
for all $x \geq 0$. Suppose that $f_1(x)$ is monotonically increasing and that $f_2(x)$ is monotonically decreasing. Then it is clear that for any point $(x, 1-x) \in X$, an improvement in the value of $f_1(x)$ can only come at the expense of a reduction in the value of $f_2(x)$, which by definition means that $(x, 1-x)$ is an efficient solution to Problem (\ref{P}). The identical conclusion could have also been arrived at by invoking Theorem \ref{thm2}. Indeed, in view of (\ref{eq_f1}) and (\ref{eq_f2}) and of the monotonicity of $f_1(x)$ and $f_2(x)$, we have assumed that $c_{1,1} - c_{1,2} > 0$ and $c_{2,1} - c_{2,2} < 0$, hence \begin{equation*}\frac{c_{2, 2} - c_{2, 1}}{c_{1, 1} - c_{1, 2}} > 0,\end{equation*} precisely the condition required by Theorem \ref{thm2}.

Where strictly monotonic criteria are involved, the assumption of opposite monotonicity (or, equivalently, condition (\ref{eq_1_thm2}) in Theorem \ref{thm2}) is necessary for $X_E$ to be equal to $X$. This is due to the fact that if $f_1(x)$ and $f_2(x)$ are both increasing or both decreasing, then they must attain their maxima at the same point $x^{\circ} \geq 0$ ($x^{\circ} = 1$ in the former case, $x^{\circ} = 0$ in the latter one), from which it follows that $X_E = \{(x^{\circ}, 1-x^{\circ})\}$. 

\end{remark}

\begin{remark}
\label{rem3}
Implicit in much of the development that succeeded Lemma \ref{lem2} has been the assumption that $n \geq 3$. This is simply a reflection of the fact that when $n = 2$, Lemma \ref{lem1}, the other important lemma in the development, does not apply in its original form, but rather in the modified form of Remark \ref{rem0}. In particular, when $n = 2$, the optimal solution set of Problem ($P_{\lambda^{T}C}$) is, for any $\lambda \in \mathbb{R}^{k}$, either $X$ or $\{\bm{e_j}\}$ for some $j = 1, 2$. In this case one can only speak of deterministic and randomized points. A close inspection of the proofs of results concerning these two classes of points shows that they, in addition to the relevant parts of the procedure, would still be valid if $n = 2$.       
\end{remark}

\section{Example}\label{sec3}

In order to illustrate the procedure described in Section \ref{sec2}, we shall consider an instance of Problem (\ref{P}) where $n = k = 3$ and the criteria matrix is given by \[C = \begin{pmatrix}
1 & 2 & -4\\
2 & -5 & 1 \\
0 & 3 & -\frac{1}{2}
\end{pmatrix}.\]
We shall see how the procedure can be used not merely to detect individual efficient solutions but to exhaustively enumerate them.

Before testing specific points for efficiency, the procedure dictates that we solve Problem (\ref{$T^0$}) to ascertain whether or not $X_E = X$. The simplex method yields an optimal value of zero. Thus, $X_E \neq X$, and so the procedure continues.

Let us consider the arbitrary point $x^{\circ} = (0.55, 0.45, 0) \in X$. To determine the status of $x^{\circ}$, a partially randomized point, we must solve Problem (\ref{$T^1$}). The optimal value of this problem equals one. Consequently, $x^{\circ} \in X_E$, and every partially randomized point with respect to $j_1 = 1$ and $j_2 = 2$ also lies in $X_E$.

Next, we turn to $x^{\circ} = (0.3, 0, 0.7)$. The corresponding problem is, as with the previous point, Problem (\ref{$T^1$}). Solution of (\ref{$T^1$}) yields an optimal value of zero. As a result, $x^{\circ} \notin X_E$. Furthermore, there exist no efficient solutions among the points that are partially randomized w.r.t.\ $j_1 = 1$ and $j_2 = 3$.

To complete the investigation of partially randomized points, choose, for example, $x^{\circ} = (0, 0.85, 0.15)$. The optimal value of Problem (\ref{$T^1$}) is nil. This implies that $x^{\circ} \notin X_E$, and that no point that is partially randomized w.r.t.\ $j_1 = 2$ and $j_2 = 3$ can be an efficient solution.

From Section \ref{sec2}, we know that at least one extreme point of $X$, and therefore one deterministic point, must be an efficient solution to Problem (\ref{P}). The deterministic points in this example are $(1, 0, 0), (0, 1, 0)$ and $(0, 0, 1)$. To determine their statuses, Problem (\ref{$T^2$}) must be solved for each of the three. Upon doing so, we find an optimal value of one for $x^{\circ} = (1, 0, 0)$ and $x^{\circ} = (0, 1, 0)$, and an optimal value of zero for $x^{\circ} = (0, 0, 1)$. Consequently, $(1, 0, 0), (0, 1, 0) \in X_E$ and $(0, 0, 1) \notin X_E$.

To summarize, we have demonstrated that the efficient solution set is given by \begin{flalign*}X_E &= Y(1, 2) \cup \{(1, 0, 0), (0, 1, 0)\}\\
&= \{(x_1, x_2, 0): x_1 + x_2 = 1;\ x_j \geq 0, j = 1, 2\}.\end{flalign*}

\section{Summary}
\label{sec4}
This paper gave a characterization of an efficient solution for the multiple objective programming problem (\ref{P}). The characterization, which distinguishes between randomized, partially randomized and deterministic solutions, aided in establishing two properties of Problem (\ref{P}). First, it was shown that unless the entire feasible region is efficient, no randomized efficient solution can exist. Secondly, it was found that the status of a partially randomized point is a function of the positions of the positive components rather than of their values. Based on these and other results, a computational procedure was presented that ascertains whether a given feasible point is an efficient solution. For each point considered, the procedure solves one of two linear programs, and, if the optimal value is positive, declares the point efficient. The procedure does not require, as is traditionally the case in multiple objective linear programming, that candidates for efficiency be extreme points. A simple illustration of the procedure was provided.



\backmatter




\section*{Declarations of interest}
The author has no competing interests to declare.

\bibliography{sn-bibliography}

\end{document}